\documentclass[11pt]{amsart}

\usepackage[T1]{fontenc}
\usepackage[utf8]{inputenc}
\usepackage{lmodern}
\usepackage{microtype}
\usepackage{amsmath,amssymb,amsthm,mathtools}
\usepackage{booktabs}
\usepackage{array}
\usepackage{longtable}
\usepackage{enumitem}
\usepackage{xcolor}
\usepackage[hidelinks]{hyperref}
\usepackage[nameinlink,capitalize]{cleveref}

\newtheorem{theorem}{Theorem}[section]
\newtheorem{proposition}[theorem]{Proposition}
\newtheorem{lemma}[theorem]{Lemma}

\theoremstyle{definition}
\newtheorem{definition}[theorem]{Definition}

\newcommand{\F}{\mathbb F}

\newcommand{\HH}{H_{343}}
\newcommand{\dd}{\mathsf d}

\newcommand{\Om}{\Omega}
\newcommand{\Sig}{\Sigma}

\title[The small Davenport constant of $H_{343}$]{The small Davenport constant of the Heisenberg group of order $343$}

\author{Andreas Volkmann}
\address{Independent researcher, Seligenstadt, Germany}
\email{volkmannandreas7@gmail.com}
\date{August 2026}

\subjclass[2020]{Primary 20D60; Secondary 11B75}
\keywords{small Davenport constant, product-one-free sequence, Heisenberg group, computer-assisted proof, SAT, LRAT certificate}

\begin{document}

\begin{abstract}
For a finite group $G$, let $\dd(G)$ denote the maximum length of a
sequence having no nonempty subsequence whose terms can be ordered to
have product one.  For an odd prime $p$, let
$H_{p^3}=\operatorname{UT}_3(\F_p)$.  Godara and Sarkar proved
$\dd(H_{27})=6$ and conjectured $\dd(H_{p^3})=3p-3$; in a recent
preprint, White proved the next case $\dd(H_{125})=12$ and left
$18\leq\dd(H_{343})\leq24$.  We prove $\dd(H_{343})=18$.

We adopt White's product-one criterion and spread framework and develop
a $p=7$-specific direction stratification.  An explicit
product-one-free sequence gives the lower bound.  For the upper bound,
we stratify a hypothetical product-one-free sequence of length $19$ by
the number of central terms and by the occupied projective directions
of its quotient multiset.  Supports on at most two directions are
excluded by a theoretical argument whose finite auxiliary statements
are exhaustively checked; the three-direction case and the case of five
central terms are settled by exact finite computations.  The remaining
thirty strata are encoded by a counterexample-guided SAT procedure.  A
separately implemented checker verifies all $9{,}920{,}815$ seed cuts
and all $27{,}207$ learned cuts, and each final unsatisfiable instance
is accompanied by a checked LRAT certificate.  A separate
implementation-level audit verifies the master encoding, the proof
archives, and the lower-bound witness.
\end{abstract}

\maketitle
\enlargethispage{\baselineskip}

\section{Introduction}

For a finite group $G$, a \emph{sequence over $G$} is a finite unordered list of elements of $G$, with repetition allowed. A nonempty subsequence is \emph{product-one} if its terms can be ordered so that their product is the identity. The small Davenport constant $\dd(G)$ is the maximum length of a product-one-free sequence over $G$.

We consider the Heisenberg group
\[
H_{p^3}=\operatorname{UT}_3(\F_p)
 =\left\{M(a,b,c)=
\begin{pmatrix}
1&a&c\\0&1&b\\0&0&1
\end{pmatrix}:a,b,c\in\F_p\right\},
\]
with multiplication
\[
M(a,b,c)M(a',b',c')=M(a+a',b+b',c+c'+ab').
\]
Write
\[
x=M(1,0,0),\qquad y=M(0,1,0),\qquad v=M(0,0,1).
\]
Then $Z(H_{p^3})=\langle v\rangle$, and $H_{p^3}/Z(H_{p^3})\cong\F_p^2$.
Moreover, with $[g,h]=ghg^{-1}h^{-1}$,
\[
[M(a,b,c),M(a',b',c')]=M(0,0,ab'-a'b).
\]
Thus two noncentral elements commute exactly when their quotient
vectors are collinear.  The projective
directions in $\F_p^2$ correspond to the $p+1$ maximal abelian
subgroups of $H_{p^3}$ that contain the center.

Godara and Sarkar proved $\dd(H_{27})=6$ and posed the conjecture
\[
\dd(H_{p^3})=3p-3
\qquad\text{for every odd prime }p
\]
\cite{GodaraSarkar2025}. In a recent preprint, White proved
$\dd(H_{125})=12$ and identified the precise obstruction that prevents
his $p=5$ argument from extending directly to $p=7$; that work left the
range
\[
18\leq \dd(H_{343})\leq 24
\]
\cite{White2026}.

Our main result settles this next case.

\begin{theorem}\label{thm:main}
For the exponent-$7$ Heisenberg group $\HH=\operatorname{UT}_3(\F_7)$,
\[
\dd(\HH)=18.
\]
\end{theorem}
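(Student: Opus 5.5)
The proof splits into a lower bound $\dd(\HH)\geq 18$ by an explicit construction and an upper bound $\dd(\HH)\leq 18$ by showing that no product-one-free sequence of length $19$ exists.

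\textbf{Lower bound.} I would take a product-one-free sequence of length $3p-3=18$ in the style of Godara--Sarkar and White. A natural candidate is $x^{6}\,y^{6}\,S'$ with six further terms $S'$, for instance central terms or terms on a third direction chosen so that the commutator contributions cannot cancel. Since $\dd(\F_7^2)=12$, the quotient image of any subsequence of $x^6y^6$ is already product-one-free. The real content is therefore to check that the added terms never complete a product-one subsequence. This is a finite check. I would do it using White's product-one criterion: a subsequence whose quotient sum is zero is product-one if and only if the central value $c$ can be adjusted through the achievable commutator offsets $\sum\pm(a_ib_j-a_jb_i)$ to reach $0$. Verifying this over all subsequences of one fixed witness is a direct exhaustive computation, and I would present it that way.

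\textbf{Upper bound.} Suppose $S$ is product-one-free of length $19$. Let $k$ be the number of central terms. Since $\dd(\F_7)=6$, we have $k\leq 6$. The quotient multiset $\bar S$ then has $19-k\geq 13>\dd(\F_7^2)=12$ terms in $\F_7^2$, so it contains many zero-sum subsequences. Each such zero-sum subsequence gives a set of achievable central values, its ``spread'' in White's framework. The goal is to show that these spreads, combined with the central terms, always reach $0$.

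I would stratify by $k$ and by the set of projective directions occupied by $\bar S$. If $\bar S$ lies on one or two directions, the relevant terms almost commute, and I would argue theoretically. On one direction the group is abelian, isomorphic to $C_7^2$, and $\dd=12$ gives the contradiction. On two directions I would use zero-sum subsequences that mix both directions. Reordering such a subsequence shifts the central value by multiples of the single nonzero commutator, so its spread is the whole of $\F_7$ and a product-one subsequence appears. Here I would rely on small exhaustive checks for the auxiliary statements. The case of three directions, and the extreme case of many central terms, I would handle by exact finite enumeration modulo the symmetry group $GL_2(\F_7)$ acting on directions, together with central translations.

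\textbf{Main obstacle.} The main difficulty is the many-direction strata, with $4$ to $8$ occupied directions and small $k$. There the spread arguments that sufficed for $p=5$ fail, as White noted. For these strata I would encode ``$S$ is product-one-free'' as a SAT problem in counterexample-guided form. The variables are multiplicities of each group element. Cuts forbid each quotient-zero-sum pattern whose spread set, shifted by the available central sums, contains $0$. The solver is run, each candidate model is refuted by a product-one subsequence, the corresponding cut is added, and the loop repeats until the instance is UNSAT. Every cut must be checked independently, and the final UNSAT claims certified with LRAT proofs, for the argument to count as a proof. I expect the difficulty to be keeping the encoding small enough to terminate; symmetry breaking under $GL_2(\F_7)$ is essential for that.
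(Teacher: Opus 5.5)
Your overall architecture matches the paper: an explicit witness, stratification by the number $z$ of central terms and by occupied directions, theory for at most two directions, exact enumeration for three directions and large $z$, and certified CEGAR-SAT with LRAT for four to eight directions. The gap is the two-direction step, which is false as you state it.

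A zero-sum block mixing two directions need not have full cross-sum set. Take $B=\{(1,0),(-1,0),(0,1),(0,-1)\}$. Then $q(I)=\sum_j b_js_j$, where $s_j$ is the sum of the first coordinates preceding the $j$-th term. These prefix sums are $0$ or $\pm1$, with a single sign fixed by which of $(\pm1,0)$ comes first. Hence $q(I)=s_{(0,1)}-s_{(0,-1)}$, and $\Om(B)=\{-1,0,1\}$.

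Swapping two adjacent noncollinear terms does change the cross sum by their determinant. But these moves cannot be freely iterated, so they only give $|\Om(B)|\geq2$. The paper's enumeration (F1) finds $6$ pair--pair and $18$ pair--triple mixed minimal blocks that are not full, with $|\Om|=3$ and $|\Om|=5$ respectively.

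The two-direction case therefore needs a genuine case analysis in $z$. It combines $|\Sig_0(W)|\geq z+1$ with the target $|\Om(B)|\geq 7-z$:
\begin{itemize}
\item For $z\in\{4,5\}$, any mixed minimal block suffices, since its $|\Om|\ge3$.
\item For $z\in\{2,3\}$, you must show both lines contain only minimal pairs. By (F2) each line is then supported on an opposite pair, with each scalar used at most six times. From this you extract a $P_2$--$P_1$ block with $|\Om|=5$.
\item For $z\in\{0,1\}$, you must exhibit a genuinely full block. This requires the structural facts (F3)--(F5) about opposite-pair lines and about triple-containing lines of sizes $11$ and $12$.
\end{itemize}
This is the missing idea; without it the two-direction stratum is not closed.

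There are also two smaller issues.

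For the lower bound, commit to $x^6y^6v^6$. A product-one subsequence has quotient sum zero, so it uses no $x$ and no $y$, leaving $v^\gamma\neq1$ with $1\le\gamma\le6$. No computation is needed. A third-direction filler can fail, e.g.\ $(xy)^6\,x\,y=(xy)^7=1$. Also, $\dd(\F_7^2)=12$ is not the reason $(1,0)^6(0,1)^6$ is zero-sum-free; the reason is that both coordinate counts lie in $\{0,\ldots,6\}$.

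Second, stratifying by quotient directions and encoding in SAT only make sense after the paper's reduction to the quotient. By central forcing, disjoint zero-sum blocks with $|\Sig_0(W)|+\sum_j(|\Om(B_j)|-1)\geq7$ yield a product-one subsequence whatever the central coordinates of the noncentral terms are. So it suffices to prove $\sigma(Q)\geq6-z$, where $\sigma$ is a maximum over disjoint packings. This is what lets the variables be quotient multiplicities $n_v$ and the cuts be monotone packing witnesses. Your proposal instead uses variables indexed by group elements, with cuts depending on whether a shifted spread set contains $0$. That keeps the central coordinates in the problem and enlarges the search space enormously.
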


The proof is computer-assisted but has a narrow and explicit trust boundary. The mathematical reduction is given in \cref{sec:criterion,sec:two-directions}. After exact auxiliary enumerations for the two- and three-direction cases and for $z=5$, the remaining cases with at least four quotient directions and $z\leq4$ form thirty SAT instances. Every monotone exclusion used by those instances is checked by code separate from the generating oracle, and every final UNSAT conclusion has a machine-checkable LRAT certificate. The full certificate package, source code, manifests, and a separate implementation-level audit are deposited in the accompanying Zenodo repository:
\[
\href{https://doi.org/10.5281/zenodo.21833183}{\texttt{10.5281/zenodo.21833183}}.
\]

The product-one criterion, spread invariant, central-forcing argument,
and line cap below are taken from White's framework \cite{White2026}
and are recalled with proofs.  All statements from that framework
needed here are restated and proved in the present paper.  The new
contribution is the closure of the $p=7$ spread problem: a theoretical
treatment of supports on at most two projective directions, exact
certified treatments of the three-direction and high-central strata,
and a proof-producing counterexample-guided SAT resolution of the
remaining thirty strata.

\section{The product-one criterion and a uniform spread reduction}\label{sec:criterion}

Let $p$ be an odd prime and write $H=H_{p^3}$.  For
$g=M(a,b,c)$ put
\[
\pi(g)=(a,b)\in\F_p^2,\qquad c(g)=c\in\F_p.
\]
For an ordered list $I=(g_1,\ldots,g_k)$ with
$\pi(g_i)=(a_i,b_i)$ define
\[
q(I)=\sum_{1\leq i<j\leq k} a_i b_j\in\F_p.
\]
For an unordered block $B$, let
\[
\Om(B)=\{q(I): I\text{ is an ordering of }B\}\subseteq\F_p.
\]

\begin{lemma}[Product-one criterion]\label{lem:criterion}
For every ordering $I=(g_1,\ldots,g_k)$,
\[
\prod_{i=1}^k g_i
=M\!\left(\sum_i a_i,\sum_i b_i,\sum_i c_i+q(I)\right).
\]
Consequently, a block $B$ has a product-one ordering if and only if
\[
\sum_{g\in B}\pi(g)=0
\quad\text{and}\quad
-\sum_{g\in B} c(g)\in\Om(B).
\]
\end{lemma}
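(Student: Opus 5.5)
The plan is to prove the product formula by induction on the length $k$ of the ordering, using only the multiplication rule $M(a,b,c)M(a',b',c')=M(a+a',b+b',c+c'+ab')$. The criterion then follows by comparing entries with the identity $M(0,0,0)$.

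For $k=1$ the formula holds because $q$ of a one-term list is the empty sum $0$. (For $k=0$ the empty product is $M(0,0,0)$, which also fits.) For the inductive step, write $P_{k-1}=\prod_{i<k}g_i=M(A,B',C+q(g_1,\dots,g_{k-1}))$, where $A=\sum_{i<k}a_i$, $B'=\sum_{i<k}b_i$ and $C=\sum_{i<k}c_i$. Multiplying on the right by $g_k=M(a_k,b_k,c_k)$ gives third coordinate
\[
C+c_k+q(g_1,\dots,g_{k-1})+A\,b_k .
\]
The correction term $Ab_k=\sum_{i<k}a_ib_k$ is exactly the set of pairs $(i,k)$ with $i<k$. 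So the third coordinate equals $\sum_{i\le k}c_i+q(I)$, and the first two coordinates add as claimed.

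For the criterion, fix an unordered block $B$ and recall that $M(a,b,c)$ is the identity iff $a=b=c=0$. By the formula, an ordering $I$ of $B$ has product one iff $\sum_{g\in B}\pi(g)=0$ and $\sum_{g\in B}c(g)+q(I)=0$. Here we use that the sums of the $a_i$, $b_i$ and $c_i$ do not depend on the ordering; only $q(I)$ does. Hence some ordering has product one iff the quotient sum vanishes and $-\sum c(g)=q(I)$ for some ordering $I$, that is, iff $-\sum c(g)\in\Om(B)$.

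There is no real obstacle. The only point needing care is the bookkeeping in the inductive step: the cross term produced by right multiplication is $(\sum_{i<k}a_i)b_k$, not $a_k(\sum_{i<k}b_i)$, which matches the convention $i<j$ with $a_ib_j$ in the definition of $q$.
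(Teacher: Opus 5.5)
Your proof is correct and matches the paper's: induction on the length, where right multiplication by $M(a_k,b_k,c_k)$ adds $c_k+\bigl(\sum_{i<k}a_i\bigr)b_k$ to the central coordinate, and this supplies exactly the new cross terms $a_ib_k$ with $i<k$. You also spell out the step the paper leaves implicit: the criterion follows because $M(a,b,c)$ is the identity exactly when $a=b=c=0$, and only $q(I)$ depends on the ordering.
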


\begin{proof}
Induct on the length. Appending $M(a,b,c)$ to a prefix with quotient
sum $(A,B)$ adds $c+Ab$ to the central coordinate, producing exactly
the cross terms $a_i b_j$ with $i<j$.
\end{proof}

Let $S$ be a sequence over $H$.  Write $W=S\cap Z(H)$ for its central
part, let $z=|W|$, and let $Q=\pi(S\setminus Z(H))$ be the multiset of
nonzero quotient vectors.  A \emph{zero-sum block} is a nonempty
submultiset $B\subseteq Q$ with vector sum $0$.
Because $q(I)$ depends only on the quotient vectors, we use the same
notation $\Om(B)$ for a quotient block $B\subseteq Q$.  When such a
block is used in $S$, let $\widetilde B$ denote the corresponding
chosen noncentral group terms; it has the same cross-sum set.

\begin{definition}[Spread]\label{def:spread}
For a quotient multiset $Q$, define
\[
\sigma_p(Q)=\max\sum_{j=1}^m\bigl(|\Om(B_j)|-1\bigr),
\]
where the maximum is over all pairwise disjoint zero-sum blocks
$B_1,\ldots,B_m\subseteq Q$.  The empty family is allowed and has value
$0$.
\end{definition}

The central terms contribute their subset sums.  Let
\[
\Sig_0(W)=\left\{\sum_{w\in U}c(w):U\subseteq W\right\}\subseteq\F_p.
\]

\begin{lemma}[Central forcing]\label{lem:forcing}
If $W$ is zero-sum-free in $C_p$, then
\[
|\Sig_0(W)|\geq z+1.
\]
If, in addition, $B_1,\ldots,B_m$ are pairwise disjoint zero-sum
blocks with $m\geq1$ and
\[
|\Sig_0(W)|+\sum_{j=1}^m\bigl(|\Om(B_j)|-1\bigr)\geq p,
\]
then $S$ has a nonempty product-one subsequence.  In particular, when
$z\leq p-2$, the inequality
\[
\sigma_p(Q)\geq p-1-z
\]
forces a product-one subsequence.
\end{lemma}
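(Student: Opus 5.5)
The plan is to prove the three assertions in the order stated, using only \cref{lem:criterion} and the Cauchy--Davenport theorem in $\F_p=C_p$.

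\emph{First claim.} Write $W=(w_1,\ldots,w_z)$ with $c_i=c(w_i)$. If $W$ is zero-sum-free, then every $c_i\neq0$. Put $A_i=\{0,c_i\}$, so that $\Sig_0(W)=A_1+\cdots+A_z$. Iterating Cauchy--Davenport gives
\[
|\Sig_0(W)|\geq\min(p,z+1).
\]
A zero-sum-free sequence in $C_p$ has length at most $p-1$, so $z+1\leq p$ and the bound is $z+1$. Alternatively, one can induct directly: if adding $c_{k+1}$ to a partial sum set $X$ with $0\in X$ produced no new element, then $X+c_{k+1}\subseteq X$ would force $X=\F_p$.

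\emph{Second claim.} Each block $B_j$ has quotient sum $0$. Hence, by \cref{lem:criterion}, every ordering $I_j$ of $\widetilde B_j$ has product $M(0,0,C_j+q(I_j))$, where $C_j=\sum_{g\in\widetilde B_j}c(g)$. As $I_j$ ranges over all orderings, these central values form the translate $C_j+\Om(B_j)$. The terms of $W$ are central, and an ordered concatenation of central elements multiplies by adding central coordinates. Therefore the set of central coordinates achievable as ordered products of $U\cup\widetilde B_1\cup\cdots\cup\widetilde B_m$, with $U\subseteq W$ arbitrary and each block fully used, contains
\[
\Sig_0(W)+\sum_j\bigl(C_j+\Om(B_j)\bigr).
\]
We apply Cauchy--Davenport to these $m+1$ summands. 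The size of the sum is at least
\[
\min\Bigl(p,\;|\Sig_0(W)|+\sum_j(|\Om(B_j)|-1)\Bigr)=p,
\]
so the sum is all of $\F_p$ and in particular contains $0$. This gives a subsequence $U\cup\bigcup_j\widetilde B_j$ with an ordering whose product is $M(0,0,0)$. It is nonempty because $m\geq1$ and the blocks are nonempty. Some care is needed with the bookkeeping: the product must be taken as one ordering of the union. Since the $\widetilde B_j$ products and the $W$ terms are central, concatenating them in any order is fine.

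\emph{Third claim.} Assume $z\leq p-2$ and $\sigma_p(Q)\geq p-1-z$.
\begin{itemize}
\item If $W$ has a nonempty zero-sum subsequence, then that subsequence is already product-one.
\item Otherwise $W$ is zero-sum-free, so $|\Sig_0(W)|\geq z+1$. Choose blocks attaining $\sigma_p(Q)$; then
\[
|\Sig_0(W)|+\sigma_p(Q)\geq z+1+p-1-z=p.
\]
We still need $m\geq1$. Since $p-1-z\geq1$, we have $\sigma_p(Q)\geq1$, so the maximizing family is nonempty. Applying the second claim finishes the argument.
\end{itemize}

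The main obstacle is only the bookkeeping in the second claim: checking that the combined ordered product's central coordinate really ranges over the full sumset. Once the blocks are seen to be central after multiplication, this reduces to the Cauchy--Davenport step.
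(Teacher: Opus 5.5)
Your proposal is correct and follows the paper's proof. You use iterated Cauchy--Davenport for $\Sig_0(W)$, then Cauchy--Davenport on $\Sig_0(W)$ and the translates $C_j+\Om(B_j)$ once you note that each zero-sum block multiplies to a central element, and you get $m\geq1$ from $p-1-z\geq1$; your explicit remarks that $z\leq p-1$ turns $\min(p,z+1)$ into $z+1$, and that a non-zero-sum-free $W$ already gives a product-one subsequence, fill in two points the paper leaves implicit.
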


\begin{proof}
Repeated use of the Cauchy--Davenport theorem
\cite{Nathanson1996} gives $|\Sig_0(W)|\geq z+1$.
Choose the corresponding pairwise disjoint group blocks
$\widetilde B_1,\ldots,\widetilde B_m$ and put
\[
C=\sum_{j=1}^m\sum_{g\in\widetilde B_j}c(g).
\]
Concatenate independently chosen orderings.  Cross terms between
different blocks vanish because every block has quotient sum $0$.
Cauchy--Davenport again gives
\[
\left|\Om(B_1)+\cdots+\Om(B_m)\right|
\geq \min\left(p,1+\sum_j(|\Om(B_j)|-1)\right).
\]
The assumed inequality therefore yields
\[
C+\Om(B_1)+\cdots+\Om(B_m)+\Sig_0(W)=\F_p.
\]
In particular, an ordering of the group blocks and a subset of $W$
can be chosen with total central coordinate $0$.  Their quotient sum
is also $0$, so \cref{lem:criterion} supplies the claimed nonempty
subsequence.  For the last assertion, $p-1-z\geq1$, so any family
attaining the stated spread is nonempty.
\end{proof}

A \emph{projective direction} is a one-dimensional subspace of
$\F_p^2$.  For a direction $L$, let $n_L$ be the number of noncentral
terms whose quotient lies on $L$.

\begin{lemma}[Line cap]\label{lem:linecap}
If $S$ is product-one-free, then every projective direction satisfies
\[
n_L+z\leq 2p-2.
\]
\end{lemma}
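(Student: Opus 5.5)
The plan is to reduce the line cap to the classical Davenport constant of an elementary abelian group of rank two. Fix a projective direction $L\subseteq\F_p^2$. Consider the subsequence $T$ of $S$ consisting of the $z$ central terms together with the $n_L$ noncentral terms whose quotient lies on $L$. We argue by contraposition: if $n_L+z\geq 2p-1$, then $T$, and hence $S$, contains a nonempty product-one subsequence.

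\textbf{Step 1: the relevant terms lie in a copy of $C_p\times C_p$.} All terms of $T$ lie in
\[
A_L=\{g\in H:\pi(g)\in L\}=\pi^{-1}(L).
\]
This is a subgroup of order $p^2$, since $\pi$ is a homomorphism onto $\F_p^2$ with kernel $Z(H)$.

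It is abelian. Indeed, for $g,h\in A_L$ the quotient vectors are collinear, so $ab'-a'b=0$, and the commutator formula gives $[g,h]=1$. Central terms commute with everything anyway.

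It has exponent $p$. By \cref{lem:criterion} applied to the constant ordering,
\[
M(a,b,c)^p=M\!\left(pa,\,pb,\,pc+\tbinom p2 ab\right),
\]
and $\binom p2\equiv0\pmod p$ because $p$ is odd. Hence $g^p=1$ for every $g\in H$, and therefore $A_L\cong C_p\oplus C_p$.

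\textbf{Step 2: apply the abelian Davenport constant.} In an abelian group, a subsequence has a product-one ordering exactly when its terms sum to zero, and then every ordering works. By Olson's theorem (equivalently, the rank-two case of the $p$-group formula), $\mathsf D(C_p\oplus C_p)=2p-1$. So any sequence of length at least $2p-1$ over $A_L$ has a nonempty zero-sum subsequence.

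If $|T|=n_L+z\geq 2p-1$, this subsequence is a nonempty product-one subsequence of $S$, contradicting the hypothesis. Hence $n_L+z\leq 2p-2$.

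\textbf{Main obstacle.} The only nontrivial ingredient is the value $\mathsf D(C_p^2)=2p-1$, which we cite rather than reprove. If a self-contained argument were wanted, the standard route is the Chevalley--Warning proof, applied to the two coordinates of a sequence of length $2p-1$ in $\F_p^2$. One uses the polynomials $\sum x_i^{p-1}u_i$ together with $\sum x_i^{p-1}$, enlarging the sequence by the zero vector (the usual $3p-2$ trick for the Erd\H{o}s--Ginzburg--Ziv variant). Alternatively, one can invoke Olson's group-ring argument.

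The remaining work is the routine verification in Step 1: that the identification of $A_L$ with $C_p^2$ respects products, so that zero-sum in $A_L$ means product-one in $H$. This follows directly from \cref{lem:criterion}, since $q(I)$ does not depend on the ordering when all quotient vectors are collinear.
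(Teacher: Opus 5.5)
Your proof is correct and is the paper's argument. The $z$ central terms and the $n_L$ terms on $L$ lie in $\pi^{-1}(L)$, which is abelian of order $p^2$ and exponent $p$, hence isomorphic to $C_p^2$, and Olson's theorem $\dd(C_p^2)=2p-2$ finishes the proof; you additionally check the abelian and exponent-$p$ claims explicitly via the commutator formula and the product-one criterion, which the paper simply asserts.
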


\begin{proof}
The inverse image $\pi^{-1}(L)$ is an abelian subgroup of order $p^2$
and exponent $p$, hence isomorphic to $C_p^2$.  The subsequence
consisting of the $n_L$ terms on $L$ together with the $z$ central
terms lies in this subgroup.  Olson's theorem gives
$\dd(C_p^2)=2p-2$ \cite{Olson1969I,Olson1969II}.
\end{proof}

\begin{proposition}[Uniform spread reduction]\label{prop:uniform}
Let $p$ be an odd prime.  A hypothetical product-one-free sequence
$S$ over $H_{p^3}$ of length $3p-2$ must satisfy
\[
0\leq z\leq p-2,\qquad |Q|=3p-2-z,
\qquad n_L\leq2p-2-z\quad\text{for every }L.
\]
Consequently, the upper bound $\dd(H_{p^3})\leq3p-3$ follows once one
proves
\[
\sigma_p(Q)\geq p-1-z
\]
for every quotient multiset satisfying these conditions.
\end{proposition}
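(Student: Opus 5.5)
The plan is to derive each constraint on $S$ directly from the earlier lemmas, and then observe that the spread inequality contradicts \cref{lem:forcing}.

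\emph{Constraints on $z$.} Let $S$ be product-one-free of length $3p-2$, with central part $W$ of size $z$ and noncentral quotient multiset $Q$. Every subsequence of $S$ is product-one-free, so $W$ is product-one-free in $Z(H)\cong C_p$. In an abelian group, product-one-free means zero-sum-free, so $W$ is zero-sum-free and $z\leq \dd(C_p)=p-1$. To exclude $z=p-1$, note that $W$ is then a zero-sum-free sequence in $C_p$ of maximal length. By \cref{lem:forcing}, $|\Sig_0(W)|\geq p$, so $\Sig_0(W)=\F_p$. This alone is not a contradiction, since the empty subset contributes $0$.

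I would instead use the line cap. The remaining $|Q|=2p-1$ noncentral terms are distributed among the directions. Take any direction $L$ with $n_L\geq1$. \cref{lem:linecap} gives $n_L+z\leq 2p-2$, so $n_L\leq p-1$, which by itself is not contradictory. A cleaner route is the following. Pick any single noncentral term $g=M(a,b,c)$ and the cyclic subgroup $\pi^{-1}(\langle(a,b)\rangle)\cong C_p^2$. I would argue via the full-sum structure: $|\Sig_0(W)|=p$ lets us cancel the central coordinate of any zero-sum block $\widetilde B$ in $Q$, whose central coordinate plus an element of $\Om(B)$ is some value $t$; choosing $U\subseteq W$ with sum $-t$ gives a product-one subsequence. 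So it suffices that $Q$ contains a nonempty zero-sum block. This holds because $|Q|=2p-1>\dd(\F_p^2)=2p-2$ by Olson's theorem, as cited in \cref{lem:linecap}. Hence $z\leq p-2$.

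\emph{Size of $Q$ and the line bound.} The equality $|Q|=3p-2-z$ is just the count of noncentral terms. The inequality $n_L\leq 2p-2-z$ is \cref{lem:linecap} restated.

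\emph{Consequence for the upper bound.} Suppose $\dd(H_{p^3})\geq 3p-2$. A product-one-free sequence of length exactly $3p-2$ then exists, since subsequences of product-one-free sequences are product-one-free. Its quotient multiset satisfies the three conditions above. The assumed inequality $\sigma_p(Q)\geq p-1-z$ together with $z\leq p-2$ triggers the last assertion of \cref{lem:forcing}, which yields a product-one subsequence, a contradiction.

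The only nonroutine step is excluding $z=p-1$. The key observation there is that $\Sig_0(W)=\F_p$ absorbs the central coordinate of any zero-sum block. The existence of such a block follows from $|Q|>\dd(C_p^2)$, using $\dd(\F_p^2)=2p-2$ to guarantee a zero-sum block in the quotient.
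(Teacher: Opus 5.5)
Your proposal is correct and follows the paper's proof. You obtain $z\leq p-1$ from zero-sum-freeness of $W$, exclude $z=p-1$ by combining $\Sig_0(W)=\F_p$ with a nonempty zero-sum block among the $2p-1$ quotient terms (Olson's theorem), take the bound on $n_L$ from the line cap, and get the final assertion from \cref{lem:forcing}. The detours through the line cap and through the subgroup $\pi^{-1}(\langle(a,b)\rangle)$ (which is $C_p^2$, not cyclic) contribute nothing and should be deleted.
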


\begin{proof}
The central part of a product-one-free sequence is zero-sum-free in
$C_p$, so $z\leq p-1$.  If $z=p-1$, Cauchy--Davenport gives
$\Sig_0(W)=\F_p$, while the remaining $2p-1$ quotient terms contain a
nonempty zero-sum block by Olson's theorem for $C_p^2$.  Choose any
ordering of that block and then a central subset cancelling its central
coordinate; \cref{lem:criterion} gives a product-one subsequence, a
contradiction.  Hence $z\leq p-2$.  The formula for $|Q|$ is immediate,
and the bounds on $n_L$ are \cref{lem:linecap}.  The final assertion is
\cref{lem:forcing}.
\end{proof}

The reduction above is uniform, but the finite proof of its spread
inequality is specialized below to $p=7$.  In particular, the present
computation is neither an asymptotic algorithm nor a proof of the
conjecture for all odd primes.  From now on, write $H=\HH$ and
$\sigma(Q)=\sigma_7(Q)$.

\section{The lower bound}

\begin{proposition}\label{prop:lower}
For every odd prime $p$, the sequence
\[
x^{p-1}y^{p-1}v^{p-1}
\]
over $H_{p^3}$ is product-one-free.  Hence
$\dd(H_{p^3})\geq3p-3$, and in particular $\dd(\HH)\geq18$.
\end{proposition}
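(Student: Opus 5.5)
We will prove that $x^{p-1}y^{p-1}v^{p-1}$ is product-one-free by applying \cref{lem:criterion} to an arbitrary nonempty subsequence and showing the quotient and central conditions cannot both hold. Take a subsequence consisting of $x^{\alpha}y^{\beta}v^{\gamma}$ with $0\leq\alpha,\beta,\gamma\leq p-1$, not all zero. Its quotient sum is $(\alpha,\beta)\in\F_p^2$. Since $0\leq\alpha,\beta\leq p-1$, this vanishes only when $\alpha=\beta=0$. The subsequence is then $v^{\gamma}$ with $1\leq\gamma\leq p-1$, and its product is $M(0,0,\gamma)\neq 1$. So a product-one subsequence must have $\alpha=\beta=0$, which is impossible.

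In the language of \cref{lem:criterion}, a zero quotient sum forces the block of noncentral terms to be empty, so $\Om(B)=\{0\}$. The central condition $-\gamma\in\{0\}$ then fails for $1\leq\gamma\leq p-1$.

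The bound $\dd(H_{p^3})\geq 3p-3$ is immediate because the sequence has length $3(p-1)$. Taking $p=7$ gives $\dd(\HH)\geq 18$.

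There is no real obstacle. The only point needing care is that the exponents are strictly below $p$: this rules out a zero quotient sum unless both exponents vanish. For this reason the cross-sum sets $\Om(B)$, which carry all the difficulty in the upper bound, play no role here.
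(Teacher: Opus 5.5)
Your proof is correct and follows the paper's argument: the bound $0\leq\alpha,\beta\leq p-1$ forces $\alpha=\beta=0$ for a zero quotient sum, and what remains is $v^\gamma=M(0,0,\gamma)\neq 1$ with $1\leq\gamma\leq p-1$. Your rephrasing via \cref{lem:criterion}, where $\Om(B)=\{0\}$ for a block of central terms so that $-\gamma\in\{0\}$ fails, is a harmless restatement of the same point.
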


\begin{proof}
A product-one subsequence would use $\alpha$ copies of $x$ and $\beta$
copies of $y$ with $0\leq\alpha,\beta\leq p-1$.  Its quotient sum can
vanish only if $\alpha\equiv\beta\equiv0\pmod p$, hence
$\alpha=\beta=0$.  What remains is $v^\gamma$ with
$1\leq\gamma\leq p-1$, which is not the identity.
\end{proof}

\section{Supports on at most two directions}\label{sec:two-directions}

We now assume that $S$ is product-one-free and $|S|=19$. The case of at most one quotient direction follows immediately from \cref{lem:linecap}. For two directions, write
\[
n_1+n_2+z=19.
\]
The two line caps imply
\[
n_1,n_2\geq7,\qquad z\leq5.
\]
All unordered pairs of line sizes below are understood up to
interchanging the two lines.
After choosing a nonzero coordinate on each line, the quotient values on each line form a multiset in $\F_7^*$.

The following finite facts were enumerated twice by separately written algorithms. A minimal zero-sum multiset is an inclusion-minimal nonempty zero-sum multiset over $C_7$.
On a fixed line, a \emph{pair} and a \emph{triple} mean minimal
zero-sum blocks of sizes $2$ and $3$, respectively; a \emph{double
pair} is a disjoint union of two pairs.  A hyphen below denotes the
disjoint union of blocks on the two independent lines.  For an
opposite-pair support $\{1,-1\}$, write
$P_i=\{1^i,(-1)^i\}$.

\begin{proposition}[Finite two-line facts]\label{prop:Ffacts}
The following statements hold.
\begin{enumerate}[label=(F\arabic*)]
\item There are $47$ minimal zero-sum multisets over $C_7$. Their counts in lengths $2$ through $7$ are, respectively, $3$, $8$, $12$, $12$, $6$, and $6$. Among the $\binom{48}{2}=1128$ unordered pairs with repetition of minimal blocks on two independent directions, exactly $6$ pair--pair blocks are nonfull with $|\Om|=3$, exactly $18$ pair--triple blocks are nonfull with $|\Om|=5$, and all other blocks are full: $\Om=\F_7$.
\item A size-$7$ line multiset all of whose minimal zero-sums are pairs is supported on one of the three opposite pairs $\{1,6\},\{2,5\},\{3,4\}$.
\item For $P_i$ and $P_j$, with $i,j\geq1$, placed on the two independent lines, one has $|\Om|=3$ for $(i,j)=(1,1)$, $|\Om|=5$ for $(1,2)$ or $(2,1)$, and $\Om=\F_7$ otherwise.
\item Every triple--double-pair block, every $(\text{pair}\sqcup\text{triple})$--pair block, and every $(\text{triple}\sqcup\text{triple})$--pair block is full.
\item No size-$12$ line multiset contains a minimal zero-sum triple but no minimal zero-sum of length at least $4$. At size $11$, the only such multisets are the six scalar multiples of $\{1^6,5,6^4\}$; each contains a pair and a disjoint triple.
\end{enumerate}
\end{proposition}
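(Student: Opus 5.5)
These are finite statements, so the plan is to verify each by exhaustive enumeration with two independently written programs, as the paper indicates. Where it helps, a hand argument will cross-check the counts. The one structural tool is a closed form for $\Om$ of a block split across two independent directions. Take basis directions $e_1,e_2$, and let the block be $A\sqcup B$ with $A=\{\alpha_i e_1\}$ and $B=\{\beta_j e_2\}$, both zero-sum on their lines. In any ordering, $q(I)=\sum_{i<j}a_ib_j$, and the only contributions come from an $e_1$-term preceding an $e_2$-term. Hence
\[
q(I)=\sum_{\alpha\in A,\ \beta\in B,\ \alpha\text{ before }\beta}\alpha\beta ,
\]
so $\Om(A\sqcup B)$ is the set of values of $\sum_{\alpha\in A}\alpha\cdot(\text{sum of the }\beta\text{'s placed after }\alpha)$ over all interleavings. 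Equivalently, it is the set of values of $\sum_k \alpha_{(k)}\,s_k$, where the $\alpha$'s may be permuted arbitrarily and $s_k$ ranges over suffix sums of an arbitrary ordering of $B$. Since both sides are zero-sum, this set is finite and easy to enumerate.

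\textbf{(F1).} First I enumerate minimal zero-sum multisets over $C_7$ by recursion on multiplicity vectors of $\F_7^*$ with total at most $7$, which is Davenport's bound. I keep those with sum $0$ and no proper nonempty zero-sum submultiset, and tabulate the counts by length. As a hand check, length $2$ gives the $3$ opposite pairs and length $7$ gives the $6$ constant multisets $\{a^7\}$. The total $47$ also gives the $48$ in $\binom{48}{2}$ as the count of unordered pairs with repetition of a minimal block on line $1$ and one on line $2$. Next, for each of the $1128$ pairs I compute $\Om$ via the interleaving formula above, using dynamic programming over the subsets of positions. I then check that the nonfull cases are exactly the stated $6$ pair--pair and $18$ pair--triple cases, with the stated sizes. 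For pair--pair, $\{a,-a\}$ against $\{b,-b\}$ gives $\Om=ab\cdot\{-1,0,1\}$, which is size $3$ in all $6$ cases up to symmetry, consistent with (F1).

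\textbf{(F2) and (F5).} These are statements about line multisets of sizes $7$, $11$ and $12$ over $\F_7^*$, given as multiplicity vectors. There are at most $\binom{17}{5}$ of them, so direct enumeration is feasible. For each multiset I compute the set of lengths of minimal zero-sum submultisets, again by enumeration of sub-multiplicity vectors. For (F2) I filter those whose only minimal zero-sums are pairs. By hand, a multiset containing both $a$ and some $b\neq\pm a$ already contains a minimal zero-sum of length at least $3$ within size $7$, and I will confirm this computationally. For (F5) I filter multisets with a triple but nothing of length at least $4$. I check that the filter is empty at size $12$, and at size $11$ that it returns exactly the orbit of $\{1^6,5,6^4\}$ under $\F_7^*$-scaling. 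For that orbit the pair $\{1,6\}$ and the disjoint triple $\{1,1,5\}$ are explicit.

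\textbf{(F3) and (F4).} For (F3), the interleaving formula and bilinearity let me scale to $\{1,-1\}$ on both lines. From there I compute $\Om(P_i\text{--}P_j)$ for $1\le i,j\le 6$, and monotonicity under adding a disjoint zero-sum pair, $\Om(B\sqcup P)\supseteq\Om(B)+\Om(P)$, reduces larger cases to these. For (F4) I enumerate all combinations from the (F1) lists, with orders taken up to symmetry, and compute $\Om$. The main obstacle I expect is making the $\Om$ computation provably correct and efficient for the largest blocks, for example a triple $\sqcup$ triple against a pair. I would use the suffix-sum formulation with a DP over the multiset of remaining elements, and cross-validate it against brute-force permutation enumeration on small cases.
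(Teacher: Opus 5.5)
Your proposal is correct and is essentially the paper's argument: exhaustive enumeration of multiplicity vectors for (F1), (F2) and (F5), exact evaluation of cross-sum sets of two-line blocks for (F1), (F3) and (F4), and the concatenation monotonicity $\Om(B\sqcup P)\supseteq\Om(B)+\Om(P)$ to reduce (F3) to finitely many cases. The differences are only in implementation: you evaluate $\Om$ through the two-line suffix-sum (interleaving) formula instead of the general recursion of \cref{lem:omega-rec}, and you tabulate all $1\le i,j\le 6$ in (F3), whereas the paper computes only the six basis blocks and notes that every other $(i,j)$ contains one of the full blocks $(2,2)$, $(3,1)$, $(1,3)$.
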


\begin{proof}[Exhaustive verification]
A line multiset is a vector $c=(c_1,\ldots,c_6)\in\mathbb N^6$.
For each required size, the enumerator loops over all weak
compositions, tests $\sum_{t=1}^6tc_t\equiv0\pmod7$, and tests
minimality by looping over every vector $0\leq d\leq c$ other than
$0$ and $c$.  This produces the stated $47$ minimal blocks and length
distribution.  For every unordered pair with repetition, the block is
placed on the coordinate axes and its cross-sum mask is evaluated by
the exact recursion in \cref{lem:omega-rec}; its population count gives
(F1).

For (F3), the recursion is applied separately to the six basis blocks
$(P_i,P_j)$ with
\[
(i,j)\in\{(1,1),(1,2),(2,1),(2,2),(3,1),(1,3)\}.
\]
These give cross-sum cardinalities $3,5,5,7,7,7$, respectively.
Every remaining pair $(i,j)$ with $i,j\geq1$ contains one of the last
three full zero-sum blocks.  Ordering that subblock first and the
remaining terms in a fixed order shows that the larger block is full
as well.

For (F2) and (F5), the same weak-composition enumeration is run at the
stated total sizes.  The program records all contained minimal
zero-sum vectors and then applies the predicates in (F2) and (F5)
literally.  For (F4), it enumerates the indicated disjoint tuples of
minimal blocks, adds their count vectors, and again applies
\cref{lem:omega-rec}.  Every loop has an explicit finite domain and no
random or floating-point step.  A separately written enumerator
reproduced the counts.  The archived standalone verifier, its deterministic
output, and their hashes permit the displayed facts to be checked directly.
\end{proof}

We shall also use the following immediate inheritance of (F2).  If a
line multiset of size at least $7$ has only pairs as its minimal
zero-sums, then every seven-term submultiset has the same property and
is supported on an opposite pair by (F2).  Any prescribed two terms
can be included in such a seven-term submultiset.  Fixing one term
therefore shows that every term on the line lies in the same opposite
pair.

\begin{theorem}[Two-direction theorem]\label{thm:two}
No product-one-free sequence of length $19$ over $\HH$ has its nonzero quotient support on at most two projective directions.
\end{theorem}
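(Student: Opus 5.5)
The overall aim is to show $\sigma(Q)\geq 6-z$. By \cref{lem:forcing} this yields a product-one subsequence, contradicting the assumption. One direction is excluded by \cref{lem:linecap}, so we may assume exactly two directions, with $n_1+n_2=19-z$, $n_1,n_2\geq 7$ and $z\leq5$. Choose coordinates so that the two lines are the coordinate axes; the line multisets are then multisets over $\F_7^*$. A line of size $n\geq7$ contains many disjoint minimal zero-sums by Olson/Davenport for $C_7$, since any $7$ terms contain one. The tool throughout is (F1): a block formed from one minimal zero-sum on each line is full, $|\Om|=7$, unless it is pair--pair or pair--triple. A single full block gives spread $6\geq 6-z$ and we are done. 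So the plan is to reduce to configurations in which every cross block is non-full, and to handle those with (F2)--(F5).

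\textbf{Step 1.} If either line contains a minimal zero-sum of length $\geq4$, pair it with any minimal zero-sum on the other line. This gives a full block by (F1), so the spread is $6$. We may therefore assume both lines have only pairs and triples as minimal zero-sums. Since $n_1+n_2=19-z\geq14$, at least one line is large. By (F5), a line of size $\geq12$ containing a triple is impossible, and the size-$11$ lines with a triple are explicit.

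\textbf{Step 2.} Suppose a line $\ell_1$ contains a triple $T$. Look for a disjoint union $T\sqcup P$ or $T\sqcup T'$ on $\ell_1$, and a pair on $\ell_2$. Alternatively, look for a triple on $\ell_1$ against a double pair on $\ell_2$. Either configuration is full by (F4), giving spread $6$. Lines of size $\geq7$ admit such disjoint decompositions after deleting a minimal zero-sum, since at least $4$ terms remain among the $7$ needed. The size constraints $n_i\geq7$ should always supply them, with the size-$11$ exceptional family of (F5) explicitly containing a pair and a disjoint triple.

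\textbf{Step 3.} If neither line contains a triple, both lines have only pairs. By the inheritance of (F2), each line is supported on an opposite pair, so after scaling it is $\{1^{a},(-1)^{b}\}$. Then $\ell_1\supseteq P_i$ and $\ell_2\supseteq P_j$ with $i=\min(a,b)$, $j=\min(a',b')$. If some multiplicity is $0$, the line would have no zero-sum among $7$ terms, which is impossible; so the line caps force $i,j$ to be large. Pairing $P_i$ with $P_j$ as in (F3), any $(i,j)\notin\{(1,1),(1,2),(2,1)\}$ is full. In the remaining small cases we combine several disjoint blocks, such as two pair--pair blocks each of value $2$ or a $(1,2)$ block of value $4$. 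Added to $|\Sig_0(W)|\geq z+1$, these should reach $7$ once the size bound forces $z$ large enough.

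\textbf{Main obstacle.} The main obstacle is the bookkeeping in Steps 2 and 3. We must guarantee that the disjoint sub-blocks exist, and in the residual small cases we must combine several non-full blocks. The counting inequality $\sum(|\Om(B_j)|-1)\geq 6-z$ must be checked against the trade-off between line sizes and $z$. This is where (F3) and (F5) will be applied case by case.
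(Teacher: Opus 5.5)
\textbf{There is a genuine gap in Step~2.} Your reduction to $\sigma(Q)\geq 6-z$ and your Step~1 are sound. Step~2, however, rests on a false claim and aims at a target that cannot be reached uniformly in $z$.

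\emph{The false claim.} A line of size $\geq 7$ containing a triple $T$ need not contain $T\sqcup P$ or $T\sqcup T'$. After deleting $T$ only $n_\ell-3$ terms remain, and fewer than $7$ terms of $C_7$ need not contain a zero-sum. For example, the only nonempty zero-sum submultiset of $\{1^6,5\}$ is $\{1,1,5\}$.

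\emph{The wrong target.} No full block need exist at all. Take $z=5$, with $\{1^6,5\}$ on the first axis and $\{1^6,6\}$ on the second; the sizes $7,7$ respect the caps. Every zero-sum block lies in $\{1,1,5\}\sqcup\{1,6\}$, and this pair--triple block has $\Om=\{0,1,2,5,6\}$. Hence $\sigma(Q)=4<6$, and Step~2 as written leaves this configuration unhandled.

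\textbf{The missing idea} is to split by $z$ and use the single-block threshold $|\Om(B)|\geq 7-z$. \Cref{lem:forcing} gives this threshold via $|\Sig_0(W)|\geq z+1$; you invoke it only vaguely at the end of Step~3. The paper argues by $z$ as follows.
\begin{itemize}
\item For $z\in\{4,5\}$, every mixed minimal block works, by (F1).
\item For $z\in\{2,3\}$, every pair--triple or larger block already has $|\Om|\geq 5$. So both lines carry only pairs and, by (F2), are opposite-pair lines. Each multiplicity is at most $6$, because seven equal terms form a minimal zero-sum of length $7$, which your Step~1 already excludes; the line caps play no role here. The line sizes then yield $P_2$ against $P_1$, and $|\Om|=5$ by (F3).
\item Only for $z\in\{0,1\}$ is a full block required. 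If the pair line contains a double pair, (F4) finishes. Otherwise the pair line is $\{a^6,-a\}$ of size exactly $7$, so the triple line has size $12-z\geq 11$. There $T\sqcup P$ or $T\sqcup T'$ does exist. This follows from (F5), or simply because the $\geq 8$ terms outside $T$ contain another minimal zero-sum, necessarily of length $2$ or $3$.
\end{itemize}

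\textbf{Step~3 needs the same $z$-dependence.} Your residual cases $(1,1)$ and $(1,2)$ occur only for $z=5$ and $z\geq 4$ respectively, where one block suffices. Two disjoint pair--pair blocks are never available there, since $\{a^6,-a\}$ contains only one disjoint pair.
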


\begin{proof}
The one-direction case is excluded by \cref{lem:linecap}. Assume exactly two directions.

Each line contains a minimal internal zero-sum block because $n_i\geq7$. By \cref{lem:forcing}, a pair of internal zero-sum blocks with $|\Om|\geq7-z$ is impossible.

If $z\in\{4,5\}$, the threshold is at most $3$, while every mixed minimal block has $|\Om|\geq3$ by (F1).

If $z\in\{2,3\}$, a product-one-free sequence would require every mixed minimal block to have $|\Om|\leq4$. By (F1), both lines can then contain only minimal pairs. By (F2), each line is supported on an opposite pair. Since the line sizes are $(9,7)$ or $(8,8)$ for $z=3$, and $(10,7)$ or $(9,8)$ for $z=2$, a $(2,1)$ opposite-pair block exists. By (F3), its $\Om$-size is $5\geq7-z$.

Indeed, in this branch no scalar can occur seven times on one line,
because those seven copies would themselves form a minimal zero-sum
block of length $7$, contrary to the assumption that every minimal
block is a pair.  Thus each of the two opposite scalars has multiplicity
at most $6$.  The displayed line sizes then guarantee the claimed
$P_2$ on one line and $P_1$ on the other.

Finally let $z\in\{0,1\}$. A full block already forces a product-one subsequence. By (F1), no line can contain a minimal block of length at least $4$, and both lines cannot contain triples. If both lines contain only pairs, (F2) reduces to opposite-pair supports.  As above, each scalar multiplicity is at most $6$; the possible line-size pairs therefore supply either $P_2$ on both lines or $P_3$ on one and $P_1$ on the other.  This block is full by (F3). If exactly one line contains triples, the other is an opposite-pair line. A double pair on the latter gives a full triple--double-pair block by (F4). Otherwise the pair-line has size $7$, so the triple-line has size $12-z$. The size-$12$ case is excluded by (F5); in the size-$11$ case, (F5) supplies a disjoint pair and triple, and (F4) gives a full block against the pair on the other line.
\end{proof}

\section{Exactly three directions}

For $T\in\operatorname{GL}_2(\F_7)$ and a zero-sum block $B$, the cross-sum set transforms affinely.

\begin{lemma}[Linear invariance]\label{lem:GL}
For every zero-sum block $B$ and $T\in\operatorname{GL}_2(\F_7)$,
\[
\Om(TB)=K_{T,B}+\det(T)\Om(B)
\]
for a constant $K_{T,B}\in\F_7$. Hence $|\Om(B)|$, fullness, and $\sigma$ are invariant under $\operatorname{GL}_2(\F_7)$.
\end{lemma}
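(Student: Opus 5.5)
We will compute $q(TI)$ directly for an ordering $I=(u_1,\ldots,u_k)$ of $B$, where $u_i=(a_i,b_i)$, and show that it differs from $\det(T)\,q(I)$ by a quantity that does not depend on the ordering. The key tool is the bilinear form $\beta(u,w)=a b'$ for $u=(a,b)$, $w=(a',b')$, so that $q(I)=\sum_{i<j}\beta(u_i,u_j)$. Split $\beta$ into symmetric and alternating parts:
\[
\beta(u,w)=\tfrac12\bigl(ab'+a'b\bigr)+\tfrac12\bigl(ab'-a'b\bigr)=s(u,w)+\tfrac12\det(u,w),
\]
which is legitimate since $2$ is invertible in $\F_7$.

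For the symmetric part, use the zero-sum hypothesis. Since $\sum_i u_i=0$, we have $0=s\bigl(\sum u_i,\sum u_i\bigr)=\sum_i s(u_i,u_i)+2\sum_{i<j}s(u_i,u_j)$. Hence $\sum_{i<j}s(u_i,u_j)=-\tfrac12\sum_i s(u_i,u_i)=-\tfrac12\sum_i a_ib_i$, which depends only on the multiset $B$ and not on the ordering. This gives
\[
q(I)=-\tfrac12\sum_{u\in B}a_ub_u+\tfrac12\sum_{i<j}\det(u_i,u_j).
\]

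Now apply $T$. The block $TB$ is again zero-sum, and the ordering $TI=(Tu_1,\ldots,Tu_k)$ ranges over all orderings of $TB$ as $I$ ranges over the orderings of $B$. Since $\det(Tu,Tw)=\det(T)\det(u,w)$, the alternating part scales by exactly $\det T$. Therefore
\[
q(TI)=c_T(B)+\det(T)\bigl(q(I)-c_1(B)\bigr),
\]
where $c_T(B)=-\tfrac12\sum_{u\in B}\mathrm{pr}(Tu)$, $\mathrm{pr}(a,b)=ab$, and $c_1(B)=-\tfrac12\sum_{u\in B}a_ub_u$. Both of these depend only on the multiset, so we may set $K_{T,B}=c_T(B)-\det(T)c_1(B)$. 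Taking all orderings yields $\Om(TB)=K_{T,B}+\det(T)\Om(B)$.

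The invariance statements then follow. Multiplication by the unit $\det T$ followed by a translation is a bijection of $\F_7$, so it preserves $|\Om|$ and fullness. Moreover, $T$ maps disjoint zero-sum blocks of $Q$ bijectively onto disjoint zero-sum blocks of $TQ$, because it is a linear bijection fixing $0$. Hence the maxima defining $\sigma(Q)$ and $\sigma(TQ)$ coincide.

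The only delicate step is the symmetric-part computation, which is exactly where the zero-sum hypothesis and the oddness of $p$ are used. Without the zero-sum condition, the ordering-dependence of the symmetric part would not cancel.
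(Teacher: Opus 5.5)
Your proof is correct, but it organizes the computation differently from the paper. The paper writes $T=\bigl(\begin{smallmatrix}\alpha&\beta\\ \gamma&\delta\end{smallmatrix}\bigr)$ and expands $q(TI)$ entrywise into four sums:
\begin{itemize}
\item the pure sums $\sum_{i<j}a_ia_j$ and $\sum_{i<j}b_ib_j$ are visibly ordering-independent;
\item the two mixed sums add up to $\sum_{i\neq j}a_ib_j$, which is ordering-independent;
\item hence the coefficient of $q(I)$ collects to $\alpha\delta-\beta\gamma=\det T$.
\end{itemize}
You instead split the cross-sum form into symmetric and alternating parts, which gives the normal form $q(I)=\mathrm{const}+\tfrac12\sum_{i<j}\det(u_i,u_j)$. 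This is coordinate-free and makes the factor $\det(T)$ transparent. The cost is a division by $2$. That is harmless for $p=7$, but the paper's expansion avoids it and works verbatim over any $\F_p$.

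Your closing remark is inaccurate, though it does not affect the argument. The zero-sum hypothesis is not what makes the symmetric part ordering-independent. A symmetric form summed over all pairs $i<j$ is a sum over unordered pairs of terms, so it is ordering-independent for every multiset. In general $\sum_{i<j}s(u_i,u_j)=\tfrac12\bigl(s(\Sigma,\Sigma)-\sum_u a_ub_u\bigr)$, where $\Sigma$ is the total vector sum. The zero-sum condition only simplifies this constant to $-\tfrac12\sum_u a_ub_u$.

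Consequently, the affine relation $\Om(TB)=K_{T,B}+\det(T)\Om(B)$ actually holds for arbitrary blocks. The paper's appeal to zero sums at the corresponding step is likewise not essential. Also, the oddness of $p$ enters only through your choice of decomposition, not through the lemma itself.
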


\begin{proof}
Expand the transformed cross terms. The pure $aa$ and $bb$ terms are ordering-independent. Since $B$ has zero total $a$- and $b$-sum, the two mixed sums differ by an ordering-independent constant, leaving coefficient $\det(T)$ on the original cross sum.
\end{proof}

The two exact primitives used in all subsequent finite computations can
be specified independently of their implementation.  Represent a block
by its multiplicity vector $n=(n_v)$ over nonzero vectors
$v=(a_v,b_v)$, and put $A(n)=\sum_v n_v a_v$.

\begin{lemma}[Exact cross-sum recursion]\label{lem:omega-rec}
The cross-sum set satisfies
\[
\Om(0)=\{0\},\qquad
\Om(n)=\bigcup_{v:n_v>0}
 \left(\Om(n-e_v)+(A(n)-a_v)b_v\right).
\]
\end{lemma}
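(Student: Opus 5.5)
The plan is to condition on which term of the block is placed \emph{last} in the ordering, and to use the quotient-sum formula from \cref{lem:criterion} to identify the contribution of that last term. Let $n$ be a nonzero multiplicity vector and $I=(g_1,\ldots,g_k)$ an ordering of the corresponding block, so $k=\sum_v n_v\geq1$. The last term $g_k$ has some quotient vector $v$ with $n_v>0$, and deleting it leaves an ordering $I'$ of the block with multiplicity vector $n-e_v$. Directly from the definition,
\[
q(I)=q(I')+\Bigl(\sum_{i<k}a_i\Bigr)b_v=q(I')+(A(n)-a_v)\,b_v ,
\]
because the only cross terms involving the last index $k$ are the $a_ib_k$ with $i<k$, and the $a$-sum of the prefix equals $A(n)-a_v$. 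This is exactly the inductive step used in the proof of \cref{lem:criterion}, now read from the end of the word.

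Both inclusions then follow. For ``$\subseteq$'': every ordering of $n$ has some last vector $v$ with $n_v>0$, so $q(I)$ lies in $\Om(n-e_v)+(A(n)-a_v)b_v$. For ``$\supseteq$'': given $v$ with $n_v>0$ and any ordering $I'$ of $n-e_v$, appending a term with quotient $v$ produces an ordering $I$ of $n$, and the displayed identity gives $q(I)=q(I')+(A(n)-a_v)b_v$. The base case $\Om(0)=\{0\}$ holds because the empty ordering has the empty sum as its cross sum.

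There is no substantive obstacle. The only points to state carefully are that $\Om$ depends only on the multiset of quotient vectors, so terms with equal quotient are interchangeable and each ordering is counted through its last vector, and that the recursion holds for every multiplicity vector, not only for zero-sum blocks. Indeed, no zero-sum hypothesis is used anywhere in the argument.
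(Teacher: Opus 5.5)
Your proof is correct and matches the paper's argument: you condition on the last vector $v$ of an ordering, note that appending it to an ordering of $n-e_v$ adds $(A(n)-a_v)b_v$ to the cross sum, and get the reverse inclusion from the fact that every ordering has a last vector. Your remark that no zero-sum hypothesis is needed is also right, and it fits the paper's use of the recursion on arbitrary residual vectors inside the packing recurrence.
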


\begin{proof}
Choose the last vector $v$ in an ordering.  Appending it to an ordering
of $n-e_v$ adds
$\left(\sum_u(n_u-\delta_{uv})a_u\right)b_v
=(A(n)-a_v)b_v$ to the cross sum.  Conversely, every ordering has a
last vector and therefore occurs in the displayed union.
\end{proof}

All subsets of $\F_7$ in this recursion were stored as seven-bit masks,
and multiplicity vectors were memoized.  For a residual vector $n$, let
\[
\mathcal Z(n)=\{b:0<b\leq n,\ \textstyle\sum_v b_vv=0\},
\qquad w(b)=|\Om(b)|-1.
\]
Then the exact disjoint-packing recurrence is
\begin{equation}\label{eq:pack-rec}
F(0)=0,\qquad
F(n)=\max\left(0,\max_{b\in\mathcal Z(n)}
                 \bigl(w(b)+F(n-b)\bigr)\right).
\end{equation}
Every packing has a first block, and componentwise subtraction enforces
disjointness, so induction on $|n|$ gives $F(n)=\sigma(n)$.  Blocks of
weight zero may be omitted.  For the decision problem, values are
safely capped at the target $6-z$.

The projective group is triply transitive on the eight directions, so exactly three directions may be normalized to
\[
(1,0),\qquad (0,1),\qquad (1,1).
\]
For each $z=0,\ldots,5$, the program enumerates every triple of line
multiplicity vectors with total size $19-z$ and individual sizes at
most $12-z$.  The line sizes are ordered, and a profile is retained only
if it is lexicographically least among those of the $36$ slot
permutations induced by the stabilizer for which the transformed line
sizes remain ordered.  The seed filter is itself an exact finite
two-line computation: it enumerates every one-line zero-sum count
vector of sizes $2$ through $7$, pairs such vectors on two independent
directions when their total size is at most $11$, and evaluates the
cross-sum set by \cref{lem:omega-rec}.  Every full pair is then used as
a monotone containment filter on each of the three pairs of lines.
This table includes the cases in (F1) but is not limited to minimal
blocks.
The term \emph{canonical profile} in \cref{tab:three} means a profile
remaining after these two sound reductions.  Each such profile is then
excluded by a full block found with \cref{lem:omega-rec}, or by the
exact packing recurrence \eqref{eq:pack-rec}.  A \emph{residual} would
be a canonical profile for which neither computation reached the target
$6-z$; none remained.  Monotone full-block cuts cache earlier witnesses
but do not alter the enumerated search space.

\begin{table}[ht]
\centering
\caption{Exact three-direction enumeration after seed filtering and stabilizer canonicalization.}
\label{tab:three}
\begin{tabular}{@{}rrr@{}}
\toprule
$z$ & canonical profiles & residuals\\
\midrule
0 & 1,900,966 & 0\\
1 & 1,323,690 & 0\\
2 &   905,993 & 0\\
3 &   617,107 & 0\\
4 &   426,520 & 0\\
5 &   301,328 & 0\\
\bottomrule
\end{tabular}
\end{table}

\begin{theorem}[Three-direction theorem]\label{thm:three}
No product-one-free sequence of length $19$ over $\HH$ has exactly three occupied quotient directions.
\end{theorem}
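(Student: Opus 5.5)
The proof reduces the statement to the finite enumeration summarized in \cref{tab:three}, so the plan is to show that this enumeration is sound and exhaustive.

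Suppose $S$ is product-one-free of length $19$ and its nonzero quotients occupy exactly three projective directions. By \cref{prop:uniform} we have $0\le z\le5$, $|Q|=19-z$, and $n_L\le12-z$ on each line. By \cref{lem:forcing}, $S$ must satisfy $\sigma(Q)\le 5-z$; in particular $Q$ contains no full zero-sum block.

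\textbf{Step 1 (normalization).} $\operatorname{PGL}_2(\F_7)$ acts triply transitively on the eight directions, so some $T\in\operatorname{GL}_2(\F_7)$ sends the three occupied directions to $(1,0),(0,1),(1,1)$. Such a $T$ also rescales each line by a nonzero scalar. By \cref{lem:GL}, $|\Om|$, fullness and $\sigma$ are preserved, and the central part $W$ is unchanged, so we may assume the directions are normalized. Then $Q$ is determined by three multiplicity vectors in $\mathbb N^6$, one per line, recording the coordinate scalars in $\F_7^*$. The line sizes are positive, sum to $19-z$, and are each at most $12-z$. The residual symmetry consists of the elements of $\operatorname{GL}_2$ fixing the three-line set: the permutations of the lines composed with scalars. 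I will check that this group induces exactly the $36$ slot permutations used for lexicographic canonicalization. Because every profile lies in some orbit, and the orbit's least member is retained, restricting to canonical profiles loses no case.

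\textbf{Step 2 (seed filter soundness).} The filter removes a profile if two of its lines contain submultisets whose union is a full zero-sum block, as certified by \cref{lem:omega-rec} on a pair of independent directions. Here a \emph{full} block is one whose weight $|\Om|-1=6$ reaches every target $6-z$. Any two of the three normalized lines are independent, and \cref{lem:GL} moves them to the axes with their fullness preserved. A full block inside $Q$ gives $\sigma(Q)\ge6\ge6-z$, so \cref{lem:forcing} produces a product-one subsequence. Hence each filtered profile is correctly excluded.

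\textbf{Step 3 (residual exclusion).} For each canonical profile surviving the filter, we evaluate either a full block via \cref{lem:omega-rec} or the packing recurrence \eqref{eq:pack-rec}. The identity $F(n)=\sigma(n)$ follows by induction on $|n|$: every packing has a first block, and subtracting it leaves a packing of the residual. Capping values at $6-z$ is harmless because only the threshold matters. \cref{tab:three} records that every profile reaches $\sigma\ge6-z$, and \cref{lem:forcing} then contradicts product-one-freeness.

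The main obstacle is not the mathematics, which consists of \cref{lem:forcing}, \cref{lem:GL} and the recursions, but certifying completeness of the enumeration. This means proving that the stabilizer canonicalization and the monotone caching never discard a profile that is not covered by an equivalent retained one. I would address it by stating the stabilizer explicitly, as line permutations together with the induced scalar actions, and arguing orbit coverage directly. The counts in \cref{tab:three} would be cross-checked by an independent enumerator, as was done for \cref{prop:Ffacts}.
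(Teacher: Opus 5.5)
Your proposal is correct and follows the paper's route: you normalize the three directions by triple transitivity, use \cref{lem:GL} to justify the two-line seed filter and the stabilizer canonicalization, and conclude from the exhaustive enumeration of \cref{tab:three} together with the exact recurrences and \cref{lem:forcing}. The only difference is cosmetic: you exclude $z=6$ through \cref{prop:uniform} rather than citing \cref{sec:highz}, and both rest on the same Cauchy--Davenport plus Olson argument.
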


\begin{proof}
For $z=0,\ldots,5$, the exhaustive enumeration summarized in \cref{tab:three} verifies $\sigma(Q)\geq6-z$ for every admissible quotient multiset. Apply \cref{lem:forcing}. The case $z=6$ is covered directly in \cref{sec:highz}.
\end{proof}

\section{Five or six central terms}\label{sec:highz}

If a zero-sum block uses at least two directions, then $|\Om(B)|\geq2$: choose two noncollinear terms adjacent in an ordering and swap them; the cross sum changes by their nonzero determinant.

For $z=5$, the target in \cref{lem:forcing} is only $1$. It therefore suffices to show that every admissible quotient multiset contains a mixed zero-sum block. This was decided by an exact subset-sum-mask dynamic program. Each line multiset is replaced by the $7$-bit mask of its nonempty subset sums; the global state records the vector sum and whether at least two directions have been used. The aggregation preserves the full weighted profile count.

Here are the full state and transition definitions.  If a line with
direction representative $d$ has scalar multiplicities
$c=(c_1,\ldots,c_6)$, put
\[
M(c)=\left\{\sum_{t=1}^6 k_tt\pmod 7:
0\leq k_t\leq c_t,\ \sum_tk_t>0\right\}.
\]
While processing directions $d_1,\ldots,d_r$, the reachable-state set
is a subset of $\F_7^2\times\{0,1,2\}$; the last coordinate counts used
directions, capped at two.  It starts with
$R_0=\{((0,0),0)\}$, and a line with mask
$M$ changes a state set $R$ to
\[
R\ \cup\ \{(u+s d_i,\min(2,q+1)):(u,q)\in R,\ s\in M\}.
\]
Thus a mixed zero-sum block exists exactly when $(0,2)$ is reachable.
Line profiles are grouped by their pair $(m,M(c))$, where
$m=\sum_t c_t$, with the exact weight
\[
h(m,M)=\#\{c:\textstyle\sum_t c_t=m,\ M(c)=M\}.
\]
The dynamic-programming key consists of the number of processed lines,
the accumulated size, and the $147$-bit reachability set.  Transition
weights are multiplied by $h(m,M)$, and only total size $14$ is retained.
Consequently, the weighted total in \cref{tab:z5} counts the original
multiplicity profiles, not merely mask classes.  As a separate count
check, for support size $r$ it equals
\[
\sum_{\substack{1\leq m_i\leq7\\\sum_i m_i=14}}
 \prod_{i=1}^r \binom{m_i+5}{5}.
\]

The action of $\operatorname{PGL}_2(7)$ has two orbits on four-element
direction supports, represented by
\[
\begin{split}
4A&=\{\langle(1,0)\rangle,\langle(0,1)\rangle,
       \langle(1,1)\rangle,\langle(1,2)\rangle\},\\
4B&=\{\langle(1,0)\rangle,\langle(0,1)\rangle,
       \langle(1,1)\rangle,\langle(1,3)\rangle\}.
\end{split}
\]
For each support size $r=5,6,7,8$ there is one orbit.
For these four orbits we use the first $r$ members of the ordered list
\[
\langle(1,0)\rangle,\ \langle(0,1)\rangle,
\ \langle(1,1)\rangle,\ldots,\langle(1,6)\rangle.
\]

\begin{table}[ht]
\centering
\caption{Exact mask computation for $z=5$.}
\label{tab:z5}
\begin{tabular}{@{}lrr@{}}
\toprule
Direction orbit & weighted profiles & profiles without a mixed zero-sum\\
\midrule
$4A$ & 4,594,362,192 & 0\\
$4B$ & 4,594,362,192 & 0\\
$5$  & 49,325,431,470 & 0\\
$6$  & 290,200,338,036 & 0\\
$7$  & 1,067,784,535,125 & 0\\
$8$  & 2,614,887,514,224 & 0\\
\bottomrule
\end{tabular}
\end{table}

\begin{proposition}\label{prop:z5}
A product-one-free sequence of length $19$ over $\HH$ has at most four central terms.
\end{proposition}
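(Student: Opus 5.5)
We must exclude $z=5$ and $z=6$; the case $z\geq7$ is already impossible by \cref{prop:uniform}, which gives $z\leq p-2=5$ for a sequence of length $3p-2=19$. So $z=6$ is excluded directly, because the central part would be zero-sum-free of length $p-1$ and $\Sig_0(W)=\F_7$, while the remaining $13\geq 2p-1$ quotient terms contain a zero-sum block by Olson's theorem. \cref{lem:criterion} then gives a product-one subsequence. I will recall this argument briefly, so that \cref{thm:three}, which defers $z=6$ to this section, is covered.

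For $z=5$ we have $|Q|=14$, and \cref{lem:linecap} gives $n_L\leq 12-5=7$ on every direction. The forcing threshold in \cref{lem:forcing} is $p-1-z=1$, so one zero-sum block $B$ with $|\Om(B)|\geq2$ suffices. By the swap observation at the start of this section, any zero-sum block meeting at least two directions has $|\Om(B)|\geq2$. It therefore suffices to show that every admissible $Q$ contains a \emph{mixed} zero-sum block, i.e.\ a zero-sum block using at least two directions.

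I will split by the number $r$ of occupied directions.
\begin{itemize}
\item $r\leq 2$ is excluded by \cref{thm:two}. Alternatively, with $r=2$ and $n_1+n_2=14$, $n_i\leq7$ forces $n_1=n_2=7$; each line then has a zero-sum, and a mixed block arises by combining a minimal block on each line, with $|\Om|\geq3$ by (F1).
\item $r=3$ is excluded by \cref{thm:three} at $z=5$.
\item $r\geq4$: by $\operatorname{PGL}_2(7)$-invariance (\cref{lem:GL}; existence of a mixed zero-sum block is clearly $\operatorname{GL}_2$-invariant and insensitive to rescaling the direction representatives), normalize the support to one of the orbit representatives $4A,4B,5,6,7,8$. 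Then invoke the exact subset-sum-mask dynamic program.
\end{itemize}

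The program is justified as follows. A mixed zero-sum block is a choice of nonempty sub-multisets on at least two lines whose line sums $s_i d_i$, with $s_i\in M(c_i)$, add to zero. The reachability recursion over $\F_7^2\times\{0,1,2\}$ therefore reaches $(0,2)$ exactly when such a block exists. Grouping profiles by $(m,M(c))$ loses nothing, since the existence question depends only on the masks. \cref{tab:z5} records zero profiles without a mixed zero-sum in every orbit.

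The main obstacle is ensuring the enumeration is sound and exhaustive. Three points need care:
\begin{itemize}
\item the orbit classification of direction supports, namely two orbits for $r=4$ and one orbit for each $r\geq5$;
\item correctness of the weighting, cross-checked against the closed binomial count;
\item the reduction from rescaled representatives, since changing the representative $d$ to $\lambda d$ only permutes the scalar multiplicities $c$, and the enumeration ranges over all $c$.
\end{itemize}
Granting these, \cref{lem:forcing} yields a product-one subsequence, contradicting product-one-freeness. Hence $z\leq4$.
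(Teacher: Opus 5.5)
Your proposal is correct and follows the paper's proof essentially step for step. The case $z=6$ is excluded by Cauchy--Davenport together with Olson's theorem; this is already the content of \cref{prop:uniform}, which gives $z\leq5$, so your treatment of it is a recap rather than a new case. The case $z=5$ is reduced to the existence of a mixed zero-sum block, with $r\leq3$ handled by \cref{thm:two,thm:three} and $r\geq4$ by the exact mask computation of \cref{tab:z5}.
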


\begin{proof}
For $z=5$, supports on at most three directions have already been
excluded by \cref{thm:two,thm:three}.  Hence the support has one of the
six orbits with $r\in\{4,5,6,7,8\}$ listed in \cref{tab:z5}.  The table
supplies a mixed zero-sum block and therefore
spread contribution at least $1=6-z$; \cref{lem:forcing} applies.

For $z=6$, the central part is zero-sum-free, so
Cauchy--Davenport gives $\Sig_0(W)=\F_7$.  Olson's theorem implies that
the remaining $13$ quotient terms contain a nonempty zero-sum block
$B$.  Let $\widetilde B$ be the corresponding noncentral group block.
Choose any ordering of $B$, with cross sum $\omega\in\Om(B)$, and then
choose $U\subseteq W$ whose central-coordinate sum is
$-\sum_{g\in\widetilde B}c(g)-\omega$.  The block
$\widetilde B\sqcup U$ is nonempty and has a product-one ordering by
\cref{lem:criterion}.  Thus $z=6$ is
excluded directly; no empty-family spread argument is used.
\end{proof}

\section{The remaining thirty strata: certified CEGAR-SAT}\label{sec:cegar}

It remains to consider
\[
z\in\{0,1,2,3,4\},\qquad r\in\{4,5,6,7,8\}.
\]
The direction-support orbit representatives $4A,4B,5,6,7,8$ were
defined above.

We first record the orbit and symmetry coverage.  Enumerating the $336$
permutations induced by $\operatorname{PGL}_2(7)$ gives respectively
$1,1,1,2,1,1,1,1$ orbits on direction subsets of sizes $1$ through
$8$.  Hence the representatives above cover every remaining support.
If a matrix preserving a representative satisfies
$A d_i=\mu_i d_{\pi(i)}$, it induces the scalar-slot permutation
\[
(i,t)\longmapsto(\pi(i),\mu_i t).
\]
The numbers of distinct slot permutations for
$4A,4B,5,6,7,8$ are, respectively,
\[
48,\ 72,\ 36,\ 72,\ 252,\ 2016.
\]

We now describe the Boolean master exactly.  A formula in
\emph{conjunctive normal form} (CNF) is a conjunction of clauses, and
SAT asks whether such a formula has a satisfying Boolean assignment.
Put $C=12-z$.  For every nonzero vector $v$ in the selected directions
and $1\leq k\leq C$, use a threshold variable
\[
x_{v,k}\Longleftrightarrow n_v\geq k.
\]
The clauses
\[
\neg x_{v,k+1}\vee x_{v,k}
\]
make the true thresholds a prefix, and hence
$n_v:=\sum_{k=1}^C x_{v,k}$.  Standard sequential-counter clauses
encode
\[
\sum_{v,k}x_{v,k}=19-z,
\quad
\sum_{v\in L,k}x_{v,k}\leq C,
\quad
\bigvee_{v\in L}x_{v,1}
\]
for every selected direction $L$.  Thus the basis CNF encodes exactly
the total size, line cap, and occupancy of every selected direction;
no variables exist for an unselected direction.

For symmetry reduction, conventional prefix-equality comparators impose
$n\leq_{\rm lex}gn$ for all slot-stabilizer permutations when
$r\leq6$, and for sixty selected permutations when $r=7,8$.  This is
sound even in the latter two cases: the lexicographically least member
of every full stabilizer orbit satisfies every such comparison.  Using
only selected symmetries can retain duplicate profiles but cannot delete
an entire orbit.

The search uses \emph{counterexample-guided abstraction refinement}
(CEGAR).  A satisfying assignment gives a multiplicity profile $n$.
The exact oracle \eqref{eq:pack-rec} computes its spread.  If a
learned witness pattern $p=(p_v)$ has
$\sigma(p)\geq6-z$, the clause
\begin{equation}\label{eq:cut}
\bigvee_{v:p_v>0}\neg x_{v,p_v}
\end{equation}
forbids exactly $p$ and its componentwise supersets.  The same packing
witness remains present in every such superset, so the cut is monotone
and sound.  For each learned pattern, all images under the full slot
stabilizer are added.  (The selected set of sixty permutations for
$r=7,8$ is used only by the lex-leader layer.)  Initial seed cuts are
derived from the certified two-line classes and cross-direction
zero-sum triples.

\Cref{tab:cegar} records the numbers of cut patterns learned by the
thirty CEGAR runs.  These entries are not the
sizes of the underlying profile spaces.  At a general prime $p$ with
$r$ occupied directions, there are $r(p-1)$ multiplicities and
$r(p-1)(2p-2-z)$ primary threshold variables.  Before line-occupancy
and cap constraints, the number of multiplicity profiles is
\[
\binom{3p-2-z+r(p-1)-1}{r(p-1)-1}.
\]
For $p=7,z=0,r=8$, this is
\[
\binom{66}{19}=17{,}302{,}625{,}882{,}942{,}400,
\]
whereas the seeded loop learns $145$ patterns in that stratum.  This
illustrates the value of the seed layer, but also why the present
certificate is not claimed to scale asymptotically.

\begin{table}[ht]
\centering
\caption{Learned cut patterns in the thirty certified CEGAR strata. Every final CNF is unsatisfiable.}
\label{tab:cegar}
\begin{tabular}{@{}crrrrrrr@{}}
\toprule
$z$ & $4A$ & $4B$ & $5$ & $6$ & $7$ & $8$ & total\\
\midrule
4 & 441 & 286 & 1,442 & 1,122 & 461 & 158 & 3,910\\
3 & 474 & 315 & 1,612 & 1,264 & 421 &  65 & 4,151\\
2 & 549 & 337 & 1,901 & 1,499 & 461 &  77 & 4,824\\
1 & 650 & 411 & 2,243 & 1,833 & 667 & 253 & 6,057\\
0 & 722 & 428 & 2,316 & 3,916 & 738 & 145 & 8,265\\
\midrule
  &     &     &       &       &     &     & 27,207\\
\bottomrule
\end{tabular}
\end{table}

The final proof layer consists of:
\begin{enumerate}[label=(\roman*)]
\item $9{,}920{,}815$ initial seed cuts, each with an explicit derivation;
\item $27{,}207$ learned cuts;
\item thirty final CNFs in the standard DIMACS integer format;
\item for twenty-nine strata, CaDiCaL refutations converted by
      \texttt{drat-trim} to the LRAT format and checked by
      \texttt{lrat-check};
\item for the final $8,z=0$ stratum, a native CaDiCaL LRAT proof checked
      by \texttt{lrat-check}.
\end{enumerate}
LRAT is a line-oriented proof format in which every derived clause cites
the clauses needed for reverse unit propagation (RUP), or for the more
general resolution-asymmetric-tautology rule.  The supplied proofs use
only positive RUP hints, and a small separate C checker replayed every
step.  Thus the SAT solver's assertion of unsatisfiability is not part
of the trusted conclusion.  The SAT and proof technologies are
described in \cite{PySAT2018,CaDiCaL2024,LRAT2017}.

\begin{proposition}\label{prop:cegar}
For every stratum in \cref{tab:cegar}, every admissible quotient multiset satisfies $\sigma(Q)\geq6-z$.
\end{proposition}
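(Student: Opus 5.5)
The plan is to prove the proposition as a soundness statement for the certified CEGAR procedure. Fix a stratum $(z,r)$ with $z\le4$ and $r\in\{4,5,6,7,8\}$. Suppose, for contradiction, that an admissible quotient multiset $Q$ has $\sigma(Q)<6-z$. Here admissible means $|Q|=19-z$, every $n_L\le 12-z$, and exactly $r$ occupied directions.

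\textbf{Step 1 (normalize $Q$).} Apply some $T\in\operatorname{GL}_2(\F_7)$ carrying the support of $Q$ onto the chosen representative. The orbit count ($1,1,1,2,1,1,1,1$ orbits of $\operatorname{PGL}_2(7)$ on direction subsets) guarantees such a $T$ exists. By \cref{lem:GL}, $\sigma$ is unchanged, so we may assume the support is the representative.

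\textbf{Step 2 (satisfy the basis CNF and the lex layer).} Among the images of $Q$ under the slot stabilizer, choose the lexicographically least multiplicity profile $n$. By \cref{lem:GL} again it still has $\sigma(n)<6-z$. Setting $x_{v,k}$ true iff $n_v\ge k$ satisfies the prefix clauses. The sequential counters encode exactly the total size $19-z$, the cap $C=12-z$ and the occupancy of each line, so these clauses hold. Every lex comparator $n\le_{\rm lex}gn$ is satisfied because $n$ is the least element of its full stabilizer orbit; this covers the case where only sixty permutations are selected.

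\textbf{Step 3 (satisfy every cut).} Each cut \eqref{eq:cut}, whether a seed cut or a learned cut, comes with a pattern $p$ carrying an explicit packing witness of value at least $6-z$. The separate checker verified all $9{,}920{,}815$ seed cuts and $27{,}207$ learned cuts, including their stabilizer images, by re-evaluating $\Om$ through \cref{lem:omega-rec} and checking disjointness and zero sums. If $n$ falsified a cut, then $n\ge p$ componentwise. The witness blocks would then sit disjointly inside $n$, giving $\sigma(n)\ge6-z$, a contradiction. Hence $n$ satisfies every cut, so it satisfies the final CNF.

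\textbf{Step 4 (contradiction).} The final CNF for this stratum has an LRAT refutation checked by \texttt{lrat-check} and replayed by the independent C checker. A checked refutation derives the empty clause by RUP steps, and each RUP step is implied by the earlier clauses. So the formula is unsatisfiable, contradicting Step 3.

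The main obstacle is Step 3: the soundness of the cut layer. The argument rests on exact correctness of the witness checker, in particular the bitmask implementation of \cref{lem:omega-rec} and the correct generation of stabilizer images as genuine $\operatorname{GL}_2$ slot maps, including the scalars $\mu_i$. A secondary point is confirming that the clauses in the certified CNF file match the encoding described here. I would rely on the implementation-level audit for this, since it checks that the master encoding, the cuts and the LRAT inputs are hash-identical.
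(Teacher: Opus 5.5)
Your proposal is correct and follows the paper's own argument step for step. You normalize the support by a $\operatorname{GL}_2(\F_7)$ map, pass to a lex-minimal stabilizer image so that the basis and symmetry-breaking clauses hold, use monotonicity of the separately rechecked cuts \eqref{eq:cut} to see that this profile survives every cut, and conclude from the checked LRAT refutation. The one small addition is that the final CNFs also contain \emph{imported} cuts besides seed and learned ones; these have the same form \eqref{eq:cut}, were rechecked in the audit, and are handled by exactly your Step~3 argument.
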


\begin{proof}
Move any hypothetical counterexample to one of the listed support
representatives and then take a surviving lex-minimal stabilizer image.
Its threshold assignment satisfies the exact basis CNF.  Every seed,
imported, or learned clause has the form \eqref{eq:cut} and was
separately rechecked by the exact cross-sum recursion and exact
disjoint-packing recurrence; it therefore removes only profiles whose
spread is at least $6-z$.  A genuine counterexample would consequently
satisfy the final CNF for its stratum.  The checked LRAT refutation of
that CNF proves that no such assignment exists.
\end{proof}

\section{Separate verification and proof boundary}\label{sec:audit}

The verification package was audited with implementations separate from
the generating oracle and SAT loop.  This is implementation-level
separation, not an external human peer audit; the audit material was
also produced with AI assistance, as disclosed below.  The following
checks were performed.

\begin{enumerate}[label=(A\arabic*)]
\item All split proof and seed archives were reassembled and checked against their SHA-256 manifests.
\item All thirty LRAT proofs were checked against their final CNFs by a separate RUP/LRAT checker.
\item All $9{,}920{,}815$ seed cuts were streamed and checked against separately recomputed two-line classes and cross triples.
\item The $3{,}496{,}105$ remaining cut-clause occurrences, comprising
$3{,}004{,}052$ distinct clauses, were decoded from the CNFs; their
$62{,}093$ stabilizer-orbit representatives were separately checked by
the exact $\Om$ recursion and exact disjoint packing.
\item The CEGAR source was audited: threshold semantics, total size, line occupancy and caps, orbit representatives, lex-leader soundness, imported cuts, and final CNF emission were checked.
\item A second master implementation matched exhaustive model sets for
eight small-parameter regression instances.  These regressions test the
basis and symmetry layers, not the final cut population; the latter is
covered by (A3)--(A4).
\item Runtime no-goods associated with resource caps or unresolved
profiles were checked not to be persisted or emitted into any final CNF.
\end{enumerate}

The remaining trust assumptions are the ordinary ones for a
computer-assisted proof: the execution platform, compilers and
interpreters, the documented semantics of the PySAT cardinality
encoders, and the source logic of the exact enumerators and checkers.
Separate implementations and finite cross-checks reduce, but do not
eliminate, these ordinary software assumptions.  Within this stated
boundary, checks (A1)--(A7) found no unresolved mathematical, encoding,
cut, or certificate gap.

\section{Proof of the main theorem}

\begin{proof}[Proof of \cref{thm:main}]
The lower bound $\dd(\HH)\geq18$ is \cref{prop:lower}. Suppose a product-one-free sequence $S$ of length $19$ existed. By \cref{thm:two} its quotient support would occupy at least three directions. By \cref{thm:three} it would occupy at least four. By \cref{prop:z5}, it would have $z\leq4$ central terms. It would therefore belong to one of the thirty strata in \cref{tab:cegar}, but \cref{prop:cegar} and \cref{lem:forcing} exclude every such stratum. Hence no product-one-free sequence of length $19$ exists, so $\dd(\HH)\leq18$.
\end{proof}

\section*{Data and code availability}

A non-peer-reviewed preprint version of this article is publicly
available at
\[
\href{https://doi.org/10.5281/zenodo.21833029}{\texttt{10.5281/zenodo.21833029}}.
\]
The full reproducibility package contains the exact auxiliary enumerators,
the CEGAR source and oracle, all seed and learned cuts, the thirty CNFs,
DRAT/LRAT proofs, checker logs, manifests, and the separate audit.  It is
publicly available in Zenodo
\cite{VolkmannData2026} under the DOI
\[
\href{https://doi.org/10.5281/zenodo.21833183}{\texttt{10.5281/zenodo.21833183}}.
\]
\begingroup\small
The artifact map is as follows.
The auxiliary exact-enumeration archive contains:
\begin{itemize}[leftmargin=*,nosep]
\item \texttt{verify\_ffacts.py} and
      \texttt{verification\_results.txt} check (F1)--(F5).
\item \texttt{pipeline\_exactkey.cpp} and
      \texttt{three\_direction\_run.txt} produce \cref{tab:three}.
\item \texttt{masken\_z5.cpp} and \texttt{z5\_mask\_results.txt}
      produce \cref{tab:z5}.
\end{itemize}
In the main repository,
\begin{itemize}[leftmargin=*,nosep]
\item \texttt{cegar.py}, \texttt{oracle.cpp}, the reconstructed
      \texttt{d18\_seeds.tar.gz}, \texttt{d18\_cuts.tar.gz}, and
      \texttt{d18\_proofs.tar} contain the SAT, cut, CNF, and LRAT layers.
\item \texttt{independent\_audit\_H343\_final.tar.gz} contains the
      separate audit reports, reimplementations, and hash lists.
\end{itemize}
The README inside the auxiliary archive gives the exact commands and software
requirements.  The hash lists \path{MANIFEST.sha256},
\path{MANIFEST5.sha256}, and \path{proof_hashes.sha256} identify the
corresponding files.
\par\endgroup

\section*{Acknowledgements}

The author thanks Patrick White for the immediately preceding work that isolated the $p=7$ obstruction and made the present target precise. The author also acknowledges the developers of PySAT, CaDiCaL, \texttt{drat-trim}, and \texttt{lrat-check}.

\section*{Declaration of generative AI and AI-assisted technologies in the research and manuscript preparation process}

During the research and preparation of this work, the author used
Anthropic Claude through the Fable service and OpenAI ChatGPT and Codex
to assist with exploratory mathematics, the design and implementation
of the search architecture, code generation, draft exposition,
adversarial review of intermediate claims, redesign of verification
steps, and a separate audit using methods and code distinct from the
generating oracle and SAT loop.  These systems were not treated as
sources of mathematical authority, and no external human audit is
claimed.  AI outputs were treated as untrusted until supported by
mathematical arguments or separately checked executable certificates.
The author reviewed and edited the resulting content and takes full
responsibility for this article, its claims, code, and certificates.
No AI system is listed as an author.

\enlargethispage{3\baselineskip}

\end{document}